\documentclass[reqno,a4paper,11pt]{amsart}
\usepackage[utf8]{inputenc}

\usepackage[english]{babel}

\usepackage[utf8]{inputenc}
\usepackage[T1]{fontenc}
\usepackage[normalem]{ulem}

\usepackage{amssymb}
\usepackage{amsmath,amsfonts}
\usepackage[tikz]{bclogo}
\usetikzlibrary{arrows, arrows.meta}
\usepackage[top=3cm,bottom=3cm,left=2.5cm,right=2.5cm]{geometry}

\usepackage{lmodern}
\usepackage{textcomp}
\usepackage{mathtools, bm}
\usepackage{amssymb, bm}
\usepackage[unq]{unique}
\usepackage{enumerate}
\usepackage{comment}
\usepackage[breaklinks]{hyperref}

\usepackage[numbers]{natbib}  

\usepackage{graphicx} 

\usepackage{amsthm}

\usepackage{tikz}
\usepackage{caption}
\usepackage{subcaption}

\usepackage{cleveref}

\usepackage{bbm}

\usepackage{enumitem}
\usepackage{thmtools}
\usepackage{thm-restate}

\newtheorem{theorem}{Theorem}[section]
\newtheorem{lemma}[theorem]{Lemma}

\theoremstyle{definition}

\newtheorem{definition}[theorem]{Definition}

\DeclareMathOperator{\rank}{rank}
\newcommand{\Rd}{\mathcal R_d}

\title{Cliques in minimally globally rigid graphs}
\date{}

\author{Julien Portier$^*$}

\begin{document}

\renewcommand{\thefootnote}{\fnsymbol{footnote}}
\footnotetext[1]{Institute of Mathematics, EPFL, Lausanne, Switzerland. \texttt{julien.portier@epfl.ch}}
\renewcommand{\thefootnote}{\arabic{footnote}}

\maketitle

\begin{abstract}
    We show that every minimally generically globally rigid graph in
$\mathbb R^d$ which contains a subgraph isomorphic to $K_{d+2}$ is itself
isomorphic to $K_{d+2}$, confirming a conjecture by Garamv{\"o}lgyi, Jackson, and Jord{\'a}n. The proof is entirely generated by ChatGPT 5.5.
\end{abstract}

\section{Introduction}

\begin{definition}[Generic global rigidity]
Let $G=(V,E)$ be a graph. A framework $p:V\to \mathbb R^d$ is said to be
\emph{globally rigid} in $\mathbb R^d$ if, for every framework
$q:V\to \mathbb R^d$ satisfying
\[
\|p(u)-p(v)\|=\|q(u)-q(v)\|
\qquad
\text{for every }uv\in E,
\]
there exists an isometry of $\mathbb R^d$ mapping $p(v)$ to $q(v)$ for
every $v\in V$.
The graph $G$ is said to be \emph{generically globally rigid} in
$\mathbb R^d$ if every generic framework $p:V\to \mathbb R^d$ is globally
rigid in $\mathbb R^d$.
\end{definition}

Our main result is the following, which confirms a conjecture by Garamv{\"o}lgyi, Jackson, and Jord{\'a}n (Conjecture 6.3 in \cite{garamvolgyi2025sparsity}).

\begin{theorem}
\label{thm:main}
Let $G=(V,E)$ be an edge-minimal generically globally rigid graph in
$\mathbb R^d$, and suppose that $|V|\ge d+3$.
Then $G$ contains no subgraph isomorphic to $K_{d+2}$.
\end{theorem}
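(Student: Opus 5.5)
The plan is to argue by contradiction using the stress characterisation of generic global rigidity. For $n=|V|\ge d+2$, Connelly and Gortler–Healy–Thurston show that $G$ is generically globally rigid in $\mathbb R^d$ if and only if, at a generic $p$, $G$ has an equilibrium stress whose stress matrix has rank $n-d-1$. Let $X\subseteq V$ span a copy of $K_{d+2}$ in $G$, let $e=uv$ be an edge inside $X$, and suppose that $G-e$ is not globally rigid. Then every equilibrium stress of $(G-e,p)$ has rank at most $n-d-2$. We want to contradict this using the copy of $K_{d+2}$ and the assumption $|V|\ge d+3$.

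\textbf{Step 1 (local structure).} At generic $p$, the points $p(X)$ have a unique affine dependence $a\in\mathbb R^V$, supported on $X$, with $\sum_i a_i=0$ and $\sum_i a_ip(i)=0$. The graph $K_{d+2}$ is a rigidity circuit, so it has a one-dimensional stress space spanned by $\omega_K$, which is nonzero on every edge. Its stress matrix is $\Omega_K=c\,aa^{T}$, which has rank one.

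\textbf{Step 2 (splitting the stress space).} Let $S$ be the stress space of $(G,p)$ and $H=S\cap\{\omega_e=0\}$, which is the stress space of $G-e$. Since $\omega_K(e)\neq 0$, we get $S=H\oplus\langle\omega_K\rangle$. A generic $\omega\in S$ has rank $n-d-1$, and a generic $s\in H$ has rank exactly $n-d-2$: it is at most $n-d-2$ by assumption, and adding a rank-one matrix must reach $n-d-1$. So $\ker\Omega_s$ is spanned by the $d+1$ affine coordinate vectors of $p$ together with one extra vector $z$. The matrix $\Omega_s+tc\,aa^T$ has rank $n-d-1$ exactly when $a\cdot z\neq 0$, so genericity forces $a\cdot z\neq 0$.

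There is a useful dichotomy here. Because $a$ is orthogonal to the affine coordinates, $\Omega_K$ annihilates a lifted configuration $(p,z)$ exactly when $a\cdot z=0$. If that held, $z$ would lie in the kernel of every stress in $S$, which is impossible. This confirms that the lift $\hat p=(p,z):V\to\mathbb R^{d+1}$ is an equilibrium of $s$ in which $\hat p(X)$ is affinely independent in $\mathbb R^{d+1}$.

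\textbf{Step 3 (main obstacle: using $|V|\ge d+3$).} This is where I expect the real difficulty. I would run the Connelly/GHT construction on $\hat p$. Projecting rotations of $\hat p$ back to $\mathbb R^d$, or taking $q$ as the affine image used in the GHT non-global-rigidity proof, gives a framework $q$ that is equivalent to $p$ on $G-e$ but not congruent to it. Since $G$ is globally rigid, $q$ must change the length of $e$. So $q$ restricted to $X$ is a non-congruent realisation of $K_{d+2}-e$; such realisations exist since $K_{d+2}-e$ is not globally rigid, and this one is produced by the lift direction $z$.

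The aim is then to show that the remaining vertices $V\setminus X$, of which there is at least one, cannot accommodate this. Each of them is attached to the rest of $G$ through stresses in $H$ that are also equilibria of $\hat p$. I would try to show that every stress in $H$ restricted to edges meeting $V\setminus X$ forces $z$ to be affine on $X\cup N(V\setminus X)$. Combined with $a\cdot z\neq 0$ and the fact that $X$ is a clique, this should produce a stress of $G-e$ of rank $n-d-1$, contradicting the assumption. Concretely, I would first try to produce an explicit new stress $s'\in H$ by combining $s$ with a stress supported near a vertex $w\notin X$ and adjacent to $X$; minimal global rigidity forces $\deg w\ge d+1$. The goal is to show $s'$ breaks the extra kernel vector $z$.

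If that fails, the fallback is a combinatorial reduction. I would replace $X$ by $X\setminus\{u\}\cup\{w\}$ and show that edge-minimality propagates, reducing to the case $|V|=d+3$. That case can then be checked directly against the stress-rank criterion.
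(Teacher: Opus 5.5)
\textbf{There is a genuine gap.} Your Steps 1 and 2 are fine. One small slip: in the dichotomy remark, $z$ depends on $s$, but $a\cdot z\neq 0$ was already established. Step 3, however, is where the contradiction has to come from, and it is only a list of things you would try. Worse, its target cannot be reached from what you use. You fix one clique edge $e$ and only use that $G-e$ is not generically globally rigid. The bound $\deg w\ge d+1$ holds for every generically globally rigid graph, so it uses no minimality. From this alone you hope either to show that $z$ is affine on $X\cup N(V\setminus X)$, which would clash with $a\cdot z\neq 0$, or to build a rank-$(n-d-1)$ stress of $G-e$. Neither is possible in general.

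Take $d=1$ and let $G$ be the triangle on $X=\{1,2,3\}$ plus a vertex $4$ joined to $1$ and $3$. Then $G$ is $2$-connected, hence generically globally rigid in $\mathbb R^1$, and $n=4=d+3$, yet $G-12$ is not $2$-connected. Here $H$ is spanned by the triangle stress on $\{1,3,4\}$, and you can take $z=\mathbf 1_{\{2\}}$. Then $a\cdot z=a_2\neq 0$, so $z$ is not affine on $X$. Any valid argument must therefore use the non-removability of edges other than $e$. It must also bring in the edges outside $X$ in a structural way. Your fallback does not supply either, since $X\setminus\{u\}\cup\{w\}$ need not span a clique, and the reduction to $|V|=d+3$ is unsupported.

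\textbf{The missing idea} is to run your Step 2 for all clique edges at once. Take a stress $\Omega$ of rank $r=n-d-1$ and set $A=aa^T$. Since $A$ has rank one, $\det(\Omega^*-tA^*)$ is affine in $t$ with nonzero constant term, where $\Omega^*$ is a nonsingular $r\times r$ minor of $\Omega$. So the line $\Omega-tA$ contains at most one matrix of rank $<r$. But for every clique edge $ij$, the value $t_{ij}=\Omega_{ij}/(a_ia_j)$ makes $\Omega-t_{ij}A$ a stress of $G-ij$, hence of rank $<r$ by minimality and the stress-rank criterion. So all the $t_{ij}$ coincide. By density of rank-$r$ stresses, every stress of $(G,p)$ then agrees with a multiple of $aa^T$ on $E(G[X])$. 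In particular, a stress vanishes on one clique edge only if it vanishes on all of them.

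The paper then uses that generically globally rigid graphs are $\mathcal R_d$-connected (Garamv\"olgyi--Gortler--Jord\'an). This gives a circuit $C$ containing a clique edge and an edge $f\notin E(G[X])$. The stress of $C$ is nonzero on every edge of $C$, hence on every clique edge, so $E(G[X])\subseteq C$. Since $K_{d+2}$ is itself a circuit, $C=E(G[X])$, contradicting $f\in C$. Your proposal has neither the simultaneous use of all clique edges nor any substitute for $\mathcal R_d$-connectivity.
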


\noindent\textbf{Comment on the use of AI.}
The proof of \Cref{thm:main} was generated by
ChatGPT 5.5, accessed through a Plus subscription. Before consulting
ChatGPT, the author was already aware of the following natural strategy to prove \Cref{thm:main}:
take a stress matrix of rank $r=n-d-1$ on the whole graph, take the simplex stress supported on the clique $K_{d+2}$, and form a linear combination of the two in order to obtain a stress matrix which simultaneously 1) vanishes on a chosen edge of the clique, and 2) still has rank $r$.
However, it was unclear how to satisfy these two requirements at the
same time. The author suspected that some algebraic trick might resolve
this difficulty, which was the main reason for posing the problem to
ChatGPT.

The author emphasizes that this intuition was not included in the prompt
given to ChatGPT. The model independently identified this strategy, and produced the subsequent proof.
The author then checked the proof and edited it for clarity and exposition, and takes responsibility
for the final mathematical content of the paper. \\

\noindent\textbf{Acknowledgements.}
The author would like to thank D{\'a}niel Garamv{\"o}lgyi and Tibor
Jord{\'a}n for interesting discussions about the subject. Moreover, after
the writing of this manuscript, Tibor Jord{\'a}n found another short proof
of \Cref{thm:main}, relying almost exclusively on Lemma~3.2 and
Corollary~4.2 of \cite{jordan2024globally}.

\section{Preliminairies}

We start with the following definition of a stress matrix.

\begin{definition}[Equilibrium stress matrix]
Let $G=(V,E)$ be a graph, and let $p:V\to \mathbb R^d$ be a framework.
An \emph{equilibrium stress matrix} of $(G,p)$ is a symmetric matrix $\Omega$
indexed by $V$ such that:
\begin{enumerate}
    \item $\Omega_{uv}=0$ for all distinct $u,v\in V$ with $uv\notin E$;
    \item $\displaystyle \sum_{v\in V}\Omega_{uv}=0$ for all $u\in V$;
    \item $\displaystyle \sum_{v\in V}\Omega_{uv}p(v)=0$ for all $u\in V$.
\end{enumerate}
\end{definition}

To avoid clutter, we will usually drop the word ``equilibrium'' and
refer simply to stress matrices. 
We will use the following result, where one direction of the equivalence was shown by Connelly (Theorem 1.3 in \cite{connelly2005generic}), and the other by Gortler, Healy, and Thurston (Theorem 1.14 in \cite{GHT2010}).

\begin{theorem}[Stress-rank characterization of generic global rigidity]
\label{thm:stress-rank}
Let $G=(V,E)$ be a graph with $n\ge d+2$ vertices, and let
$p:V\to \mathbb R^d$ be a generic framework. Then $G$ is generically
globally rigid in $\mathbb R^d$ if and only if $(G,p)$ admits an
equilibrium stress matrix $\Omega$ such that
\[
\rank \Omega = n-d-1.
\]
\end{theorem}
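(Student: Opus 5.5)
The statement immediately above is Theorem~\ref{thm:stress-rank}, which the paper cites rather than proves. I would prove the two directions separately, following Connelly for sufficiency and Gortler--Healy--Thurston for necessity. Throughout, let $f:(\mathbb R^d)^V\to\mathbb R^E$ be the edge-length measurement map $f(p)=(\|p(u)-p(v)\|^2)_{uv\in E}$. The basic observation is that a vector $\omega\in\mathbb R^E$ is an equilibrium stress of $(G,p)$ exactly when $\omega$ annihilates the image of $df_p$. In other words, the stresses at $p$ are the conormal vectors to the image of $f$ at $f(p)$, and the stress matrix $\Omega$ is the Laplacian-type matrix built from $\omega$.

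\emph{Sufficiency (Connelly).} Let $\Omega$ be a stress matrix of $(G,p)$ with $\rank\Omega=n-d-1$. The all-ones vector and the $d$ coordinate vectors of $p$ lie in $\ker\Omega$. Since $p$ is generic, its points affinely span $\mathbb R^d$, so these $d+1$ vectors are independent and span $\ker\Omega$.

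Now take any $q$ with $f(q)=f(p)$. I would first show that $\Omega$ is also a stress for $q$. Because $p$ is generic, $f(p)$ is a generic point of the image of $f$, which is defined over $\mathbb Q$. Every preimage $q$ of this point is then a regular point of $f$ with the same image of the differential, namely the tangent space at the smooth point $f(p)$. Hence the conormal spaces at $p$ and $q$ agree, and $\Omega$ is a stress for $q$. It follows that the coordinate vectors of $q$ lie in $\ker\Omega$, so $q$ is an affine image of $p$.

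To conclude, I would show that an affine map $A$ preserving all edge lengths of a generic framework is an isometry. Write $A^TA-I=S$. The condition says $(p(u)-p(v))^TS(p(u)-p(v))=0$ for every edge $uv$. The rank condition forces every vertex to have degree at least $d+1$ and forces $G$ to be rigid. Therefore the edge directions are generic enough not to lie on a common quadric at infinity, and $S=0$.

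\emph{Necessity (Gortler--Healy--Thurston).} This is the hard direction and the main obstacle. Suppose every stress of generic $(G,p)$ has rank less than $n-d-1$; I need to produce a non-congruent $q$ with $f(q)=f(p)$. The plan is to pass to the quotient by the Euclidean group and consider the Gauss map of the measurement variety $M=\overline{f((\mathbb R^d)^V)}$. If no stress has maximal rank, then the space of stresses at $p$ is shared by a positive-dimensional family of non-congruent frameworks, namely those in the common kernel. From this, the fibres of the Gauss map of $M$ at generic points are positive-dimensional.

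I would then use a degree argument. I would compare the mod-$2$ degree of $f$, restricted to the quotient by congruences, with that of a suitable ``shadow'' map built from the Gauss map. The aim is to show that a generic point of $M$ has at least two real preimage classes, which gives the required equivalent but non-congruent $q$. The delicate parts are proving real (not just complex) existence of $q$ and handling the case where $G$ is not even generically rigid; the latter I would dispatch first, since it yields a flex directly.
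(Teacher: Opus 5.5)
The paper does not prove this statement. It quotes Connelly \cite{connelly2005generic} for ``rank-$(n-d-1)$ stress $\Rightarrow$ generically globally rigid'' and Gortler--Healy--Thurston \cite{GHT2010} for the converse, which is exactly the attribution you give, so for the paper's purposes citing them is enough. Read as a proof, your sufficiency half is a faithful outline of Connelly's argument. Your necessity half, however, has a genuine gap, and the mechanism you announce there does not distinguish the two cases.

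Because $df_{Ap+b}(\dot q)=df_p(A^T\dot q)$ (with $A^T$ applied vertexwise), every affine image $Ap+b$ with $A$ invertible has the same differential image as $p$, and hence the same stress space. So for \emph{every} graph, globally rigid or not, the Gauss fibre through $f(p)$ contains $\{f(Ap+b):A\in GL_d\}$. This is a positive-dimensional family of non-congruent frameworks sharing all stresses of $p$, so ``positive-dimensional fibres'' holds automatically and tells you nothing. What you need is that the fibre is strictly larger than this affine part. Equivalently, the shared stress kernel $\bigcap_{\Omega\in\mathcal S(G,p)}\ker\Omega$ must strictly contain the span of $\mathbf 1$ and the coordinate vectors of $p$. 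Even this does not follow formally from ``every stress has rank $<n-d-1$''. With $E_{ij}$ the matrix units, the span of $E_{12}+E_{21}$ and $E_{13}+E_{31}$ consists of singular symmetric matrices yet has trivial common kernel. Genericity of $p$ must be used to identify the kernel of a generic stress with the shared kernel.

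The decisive step, producing a second \emph{real}, non-congruent $q$ with $f(q)=f(p)$, is only stated as an aim, and as you formulate it, it is false. The image of $f$ is invariant under positive scaling, so the ideal of $M$ is homogeneous and $M=-M$. Hence $-f(p)$ is a generic smooth point of $M$. Its coordinates are negative, so it has no real preimage, and this holds for every graph. Consequently no degree count over $M$ as a whole can give what you want. You would have to restrict to a domain and target where a mod-$2$ degree is genuinely well defined: properness on configurations modulo congruence, and a connected target containing $f(p)$. You would then have to use the enlarged fibre to show that this degree vanishes, so that $f(p)$ has an even number, hence at least $2$, of congruence classes of preimages. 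That is the whole content of the Gortler--Healy--Thurston theorem, and none of it is supplied.

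A smaller point in your sufficiency half concerns ruling out a conic at infinity. Invoking rigidity of $G$ for this is circular at that stage. Once every equivalent framework is an affine image of $p$, rigidity of $(G,p)$ is equivalent to the absence of such a conic. What you need instead is Connelly's separate lemma: for generic $p$ with minimum degree at least $d$, the matrices $(p_u-p_v)(p_u-p_v)^T$, $uv\in E$, span all symmetric $d\times d$ matrices. Your correct bound that every vertex has degree at least $d+1$ then lets you apply it.
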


Next we recall the following definitions. Given a graph $G=(V,E)$, the $d$-dimensional rigidity matrix $R_d(G,p)$ of a generic framework $p:V\to \mathbb R^d$ is the matrix with one row for each edge $uv\in E$ and $d$ columns for each vertex. In the row corresponding to $uv$, the block of columns corresponding to $u$ is $p(u)-p(v)$, the block corresponding to $v$ is $p(v)-p(u)$, and all other entries are zero.
Let $\mathcal R_d(G)$ denote the matroid on $E(G)$ represented by the
rows of the $d$-dimensional rigidity matrix $R_d(G,p)$.
Note that this definition does not depend on the generic framework $p$.

\begin{definition}[$\mathcal R_d$-circuit]
Let $G$ be a graph. A set of edges $C\subseteq E(G)$ is called an
$\mathcal R_d$-\emph{circuit} if $C$ is minimally dependent in
$\mathcal R_d(G)$; equivalently, the rows of the generic $d$-dimensional
rigidity matrix indexed by $C$ are linearly dependent, while the rows
indexed by every proper subset of $C$ are linearly independent.
\end{definition}

\begin{definition}[$\Rd$-connected]
We say that a graph $G$ is $\Rd$-connected if for every pair of edges
$e,f\in E(G)$ there is an $\Rd$-circuit $C\subseteq E(G)$ such that
$e,f\in C$.
\end{definition}

We recall the following result by Garamv{\"o}lgyi, Gortler, and Jord{\'a}n (Theorem 3.5 in \cite{GGJ2022}).

\begin{theorem}
\label{thm:Rd-connected}
Let $G$ be a graph with $n\ge d+2$ vertices. If $G$ is generically
globally rigid in $\mathbb R^d$, then $G$ is $\Rd$-connected.
\end{theorem}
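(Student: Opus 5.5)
The plan is to argue by contradiction through the stress characterization of \Cref{thm:stress-rank}, using the fact that the space of equilibrium stresses of $(G,p)$ is the left kernel of $R_d(G,p)$. That left kernel splits as a direct sum along the connected components of the matroid $\Rd(G)$.

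Suppose $G$ is generically globally rigid with $n\ge d+2$ vertices but not $\Rd$-connected. Let $E_1,\dots,E_k$ ($k\ge 2$) be the connected components of $\Rd(G)$, where $\Rd$-connectedness of $G$ means $k=1$. Let $G_i=(V_i,E_i)$ be the corresponding subgraphs, with $n_i=|V_i|$. Every row dependency of $R_d(G,p)$ is a sum of dependencies supported on single components. Hence every stress matrix decomposes as $\Omega=\sum_i \Omega_i$, where $\Omega_i$ is a stress matrix of the generic framework $(G_i,p|_{V_i})$, extended by zeros to $V\times V$. By \Cref{thm:stress-rank} we may fix a generic $p$ and an $\Omega$ with $\rank\Omega=n-d-1$. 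Its kernel then consists exactly of the restrictions to $V$ of affine functions on $\mathbb R^d$, evaluated at the points $p(v)$.

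The aim is to exhibit a vector in $\ker\Omega$ that is not of this affine form, which contradicts the rank. Each $\Omega_i$ annihilates every vector that is affine on $V_i$ with respect to $p$. So it suffices to find $x\in\mathbb R^V$ such that:
\begin{itemize}
\item $x|_{V_i}$ is affine in $p$ for every $i$;
\item $x$ is not globally affine.
\end{itemize}
The first step is to dispose of degenerate cases. Components that are coloops (single bridge edges, $\Omega_i=0$) impose no condition. Since $G$ is rigid with $n\ge d+2$ and, by Hendrickson's theorem, redundantly rigid and $(d+1)$-connected, every component spans at least $d+2$ vertices.

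The main obstacle is the remaining case, where the affine maps on different $V_i$ must agree on the overlaps $V_i\cap V_j$. A naive count comparing $\sum_i(n_i-d-1)$ with $n-d-1$ goes the wrong way, since overlaps can be large. I would instead exploit the matroid decomposition directly. The rank function is additive over components, giving $dn-\binom{d+1}{2}=\sum_i \rank(E_i)$. The same additivity holds for the space of infinitesimal motions: a vector field that is an infinitesimal isometry on each $G_i$ is a motion of $G$.

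Taking the component $E_1$, I would consider the framework obtained by applying to $p|_{V_1}$ a nontrivial affine map $A$ preserving the stress $\Omega_1$ (all of $\Omega_1$'s equilibrium conditions are affine-invariant), while keeping the other components fixed. This is consistent only if $A$ fixes $V_1\cap\bigcup_{j\ne1}V_j$. If this intersection affinely spans $\mathbb R^d$, I would fall back on the rigidity-matroid additivity, arguing that $E_1$ and $E\setminus E_1$ would then have ranks summing to more than $dn-\binom{d+1}{2}$. I expect this case analysis, where the shared vertices are in general position, to be where the real work lies. If it resists, the fallback is to prove directly that a generic stress of maximal rank has all component blocks $\Omega_i$ of full rank $n_i-d-1$, and that the kernels glue only affinely when $k=1$.
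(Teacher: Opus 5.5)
The paper does not prove this statement; it only quotes it from Garamv{\"o}lgyi--Gortler--Jord{\'a}n. Judged on its own, your proposal has a genuine gap, and it sits exactly at the step you flag.

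You only look for vectors in $\ker\Omega$ that are affine on every $V_i$. Such a vector can fail to be globally affine only if some component is attached to the rest along a set that does not affinely span $\mathbb R^d$; otherwise the affine pieces are forced to agree. You give no argument excluding large attachments. Your fallback inequality $r(E_1)+r(E\setminus E_1)>dn-\binom{d+1}{2}$ does not follow from rank additivity. Rank counting bounds overlaps only between pieces that are rigid on their own vertex sets. For $d\ge 3$ an $\mathcal R_d$-component need not be rigid: the double banana is a flexible $\mathcal R_3$-circuit. So $r(E_1)$ can fall far below $d|V_1|-\binom{d+1}{2}$, and the count says nothing. For exactly such components, $\ker\Omega_i$ is strictly larger than the affine functions on $V_i$. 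For instance, the double banana's stress matrix has rank $2$, not $8-4=4$. Restricting to affine pieces therefore throws away the vectors you actually need. Your second fallback, that every block $\Omega_i$ has rank $n_i-d-1$, is unsupported. It would make every component globally rigid, in particular rigid, which is the same obstacle.

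The missing idea is to dilate one component and absorb everything else into infinitesimal flexes, rather than gluing affine pieces.
\begin{enumerate}
\item Let $f$ be the squared edge-length map, so $df_p=2R_d(G,p)$ and its image is $T=S^\perp$, where $S$ is the stress space.
\item Take $E_1$ to be a non-coloop component and $E_2=E\setminus E_1\neq\emptyset$. Since $S=S_1\oplus S_2$, the space $T$ is a product $T_1\times T_2$ in $\mathbb R^{E_1}\times\mathbb R^{E_2}$.
\item Because $f(p)=\tfrac12 df_p(p)\in T$, there is $\dot q$ with $df_p(\dot q)=(f(p)|_{E_1},0)$.
\item Use the standard lemma that at a generic framework every infinitesimal flex lies in the kernel of every stress matrix. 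By the constant rank theorem the stress space is constant along the fibre of $f$ through $p$; differentiate $\Omega q(t)=0$.
\item Now $\dot q|_{V_2}$ is a flex of $(G_2,p)$, and $(\dot q-\tfrac12 p)|_{V_1}$ is a flex of $(G_1,p)$ with $\Omega_1p=0$. Hence every stress kills $\dot q$.
\item The stress of rank $n-d-1$ then forces $\dot q_v=Bp_v+c$. This gives $(p_u-p_v)^T(B+B^T-I)(p_u-p_v)=0$ on $E_1$ and $(p_u-p_v)^T(B+B^T)(p_u-p_v)=0$ on $E_2$.
\item Every vertex of $(V_1,E_1)$ has degree at least $d+1$, so Connelly's conic-at-infinity lemma gives $B+B^T=I$. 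This contradicts the condition on $E_2\neq\emptyset$.
\end{enumerate}
Geometrically, the measurement variety is a product of cones, so one component's lengths can be rescaled without changing the tangent space. That is the input your affine-gluing construction cannot see.
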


We now show the following simple lemma.

\begin{lemma}[Full-rank stresses are dense]
\label{lem:full-rank-stresses-dense}
Let $(G,p)$ be a generic framework in $\mathbb R^d$ on $n$ vertices, and
let $\mathcal S(G,p)$ denote the vector space of equilibrium stress
matrices of $(G,p)$. Suppose that $\mathcal S(G,p)$ contains an equilibrium stress
matrix of rank $r=n-d-1$.
Then the set
\[
\mathcal S_r(G,p)
=
\{\Omega\in \mathcal S(G,p): \rank \Omega=r\}
\]
is dense in $\mathcal S(G,p)$, with respect to the usual Euclidean
topology on the finite-dimensional vector space $\mathcal S(G,p)$.
\end{lemma}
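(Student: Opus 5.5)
The plan is to bound the rank of every stress above by $r$, and then use the standard fact that a nonzero polynomial vanishes only on a nowhere dense set.

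First I will show that every $\Omega\in\mathcal S(G,p)$ satisfies $\rank\Omega\le r$. By conditions (2) and (3) of the definition, the $d+1$ vectors in $\mathbb R^V$ given by the all-ones vector $\mathbf 1$ and the $d$ coordinate vectors $(p(v)_i)_{v\in V}$ all lie in the kernel of $\Omega$. Since $p$ is generic and $n\ge d+1$, the points $p(v)$ affinely span $\mathbb R^d$. Hence these $d+1$ vectors are linearly independent, so $\dim\ker\Omega\ge d+1$ and $\rank\Omega\le n-d-1=r$. (If $n<d+1$, then no stress of rank $r$ can exist and the hypothesis is vacuous, so there is nothing to prove.) It follows that $\mathcal S_r(G,p)$ is exactly the set of stresses whose rank is not less than $r$.

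Next I will show that $\mathcal S_r(G,p)$ is the complement of an algebraic subset. Fix a basis $\Omega_1,\dots,\Omega_k$ of $\mathcal S(G,p)$ and write a general element as $\Omega(t)=\sum_i t_i\Omega_i$ with $t\in\mathbb R^k$. Then $\rank\Omega(t)\ge r$ holds if and only if some $r\times r$ minor of $\Omega(t)$ is nonzero. Each such minor is a polynomial in $t$. By hypothesis there is some $t_0$ with $\rank\Omega(t_0)=r$, so there is a fixed $r\times r$ minor $f(t)$ with $f(t_0)\ne 0$. Therefore $f$ is a nonzero polynomial on $\mathbb R^k$, and
\[
\{t: f(t)\neq 0\}\subseteq \{t:\rank\Omega(t)=r\}.
\]

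Finally, the zero set of a nonzero real polynomial has empty interior. To see this, note that if $f$ vanished on an open ball, then all its derivatives would vanish at the center, forcing $f\equiv 0$. So $\{f\neq 0\}$ is dense in $\mathbb R^k$. The coordinate map $t\mapsto\Omega(t)$ is a linear homeomorphism onto $\mathcal S(G,p)$, so the image of this dense set is dense, and hence $\mathcal S_r(G,p)$ is dense.

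The only step requiring any care is the upper bound $\rank\Omega\le r$ in the first paragraph, since it relies on the affine span of a generic framework. That is immediate for generic $p$ with $n\ge d+1$. The rest is the standard minor-polynomial argument.
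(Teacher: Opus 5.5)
Your proof is correct and rests on the same idea as the paper's: the rank-$r$ stresses contain the nonvanishing set of the fixed $r\times r$ minor that is nonzero at a given rank-$r$ stress $\Omega_0$. The paper restricts this minor to the line $t\mapsto\Omega+t\Omega_0$ through an arbitrary stress $\Omega$, which gives a univariate polynomial with leading coefficient $\det\Omega^*\neq 0$ and so avoids the multivariate empty-interior fact you invoke; in addition, you prove the bound $\rank\Omega\le r$, which the paper only calls standard.
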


\begin{proof}
It is standard that every stress matrix of a generic framework on $n$
vertices in $\mathbb R^d$ has rank at most $r=n-d-1$.

Let $\Omega_0\in \mathcal S(G,p)$ with $\rank \Omega_0=r$. Then $\Omega_0$ has a $r \times r$ submatrix $\Omega^*$ such that $\det(\Omega^*) \neq 0$. Let $I$ and $J$ be respectively the set of rows and columns of $\Omega^*$.

Let $\Omega\in\mathcal S(G,p)$ be arbitrary. Consider
\[
f(t)
=
\det\bigl(\Omega[I,J]+t\Omega^*\bigr),
\]
where $\Omega[I,J]$ is the submatrix of $\Omega$ restricted to set of rows $I$ and set of columns $J$.
Since the leading coefficient of $f(t)$ is $\det(\Omega^*)\ne 0$, then the polynomial $f$ is not identically zero. Hence it has only finitely
many zeros. Therefore, for arbitrarily small values of $t$, we have $f(t)\ne 0$, and thus $\rank(\Omega+t\Omega_0) = r$.
As it is easy to see that $\Omega+t\Omega_0 \in \mathcal S(G,p)$ for every $t$, this concludes the proof.
\end{proof}

\section{Proof of \Cref{thm:main}}

We start with the following lemma.

\begin{lemma}
\label{lem:clique-proportionality-full-rank}
Let $G=(V,E)$ be minimally generically globally rigid in $\mathbb R^d$,
and suppose $|V|=n\ge d+3$, and let $r=n-d-1$.
Let $p:V\to \mathbb R^d$ be a generic framework of $G$. Suppose that
$X\subseteq V$ spans a copy of $K_{d+2}$ in $G$. Then there exist
nonzero reals $a_i$, $i\in X$, such that for every stress matrix
$\Omega$ of $(G,p)$ with $\rank \Omega=r$, there exists $\lambda=\lambda(\Omega) \in \mathbb{R}$ such that
$\Omega_{ij}=\lambda a_i a_j$ for every  $ij\in E(G[X])$.
\end{lemma}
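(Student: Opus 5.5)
The plan is to compare a full-rank stress with the stress of the clique, and to use that the clique stress matrix has rank one.

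\textbf{The clique stress.} Since $p$ is generic, the $d+2$ points $p(i)$, $i\in X$, satisfy a unique (up to scaling) affine dependency: reals $a_i$ with $\sum_{i\in X}a_i=0$ and $\sum_{i\in X}a_ip(i)=0$. By genericity every $a_i$ is nonzero, since otherwise $d+1$ generic points would be affinely dependent. Define $\Psi$ indexed by $V$ by $\Psi_{ij}=a_ia_j$ for $i,j\in X$ and $\Psi_{ij}=0$ otherwise. Then $\Psi=aa^{T}$, where $a$ is extended by zero outside $X$, and the diagonal entries $a_i^2=-\sum_{j\ne i}a_ia_j$ are consistent with the row-sum condition. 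Each row of $\Psi p$ equals $a_i\sum_j a_jp(j)=0$. Hence $\Psi$ is an equilibrium stress matrix of $(G,p)$, supported on $E(G[X])$, of rank one, with $\Psi_{ij}\neq0$ for every $ij\in E(G[X])$.

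\textbf{Minimality gives rank drops.} Fix $\Omega$ with $\rank\Omega=r$, and fix an edge $e=ij$ of $G[X]$. Put $t_e=-\Omega_{ij}/(a_ia_j)$. Then $\Omega+t_e\Psi$ is a stress of $(G,p)$ vanishing on $e$, so it is a stress of the generic framework $(G-e,p)$. By minimality $G-e$ is not generically globally rigid, and $G-e$ still has $n\ge d+2$ vertices. By \Cref{thm:stress-rank}, $\rank(\Omega+t_e\Psi)<r$. So every value $t_e$, $e\in E(G[X])$, is a parameter at which the pencil $\Omega+t\Psi$ drops rank.

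\textbf{The pencil drops rank at most once.} This is the key step. Every stress matrix $M$ has in its kernel the space $K$ spanned by $\mathbf 1$ and the $d$ coordinate vectors of $p$. Since $p$ is generic and $n\ge d+1$, $\dim K=d+1$. Choose a complement $W$ of $K$ with $\dim W=r$, and let $\tilde M$ be the compression of $M$ to $W$ (restrict the quadratic form). Then $\rank M=\rank\tilde M$. Now $\tilde\Omega$ is invertible, and $\tilde\Psi=\tilde a\tilde a^{T}$ is rank one. By the matrix determinant lemma,
\[
\det(\tilde\Omega+t\tilde a\tilde a^{T})=\det(\tilde\Omega)\bigl(1+t\,\tilde a^{T}\tilde\Omega^{-1}\tilde a\bigr),
\]
which is a polynomial of degree at most one in $t$ with nonzero constant term. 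It therefore has at most one root $t^*$. Because a root exists (namely $t_e$), the linear coefficient is nonzero and the root is unique. Hence $t_e=t^*$ for every clique edge $e$, that is, $\Omega_{ij}=-t^*a_ia_j$ for all $ij\in E(G[X])$. Setting $\lambda(\Omega)=-t^*$ proves the lemma, and the $a_i$ depend only on $p$ and $X$, not on $\Omega$.

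The main obstacle I expect is making the compression step fully rigorous: checking that the stress kernels contain exactly the $(d+1)$-dimensional space $K$, so that rank $r$ corresponds to invertibility on $W$. Once that is in place, the rank-one determinant identity does all the work. Note that \Cref{thm:Rd-connected} and \Cref{lem:full-rank-stresses-dense} are not needed for this lemma itself.
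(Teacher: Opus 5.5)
Your proof is correct and follows the paper's argument: the rank-one clique stress $aa^{T}$, minimality forcing $\Omega+t\,aa^{T}$ to drop rank at each $t_e$, and the fact that the determinant of a nonsingular $r\times r$ matrix plus a rank-one perturbation is affine in $t$ with nonzero constant term, so it has at most one root. The only difference is how you obtain the nonsingular $r\times r$ matrix: you compress to a complement $W$ of $K$, and the rank equality you flagged does hold, because $M$ kills $K$ (so $M(W)=\mathrm{im}\,M\subseteq K^{\perp}$ by symmetry) and $K^{\perp}\cap W^{\perp}=0$; the paper more economically takes a nonsingular $r\times r$ minor $\Omega^*$ of $\Omega$ together with the corresponding submatrix $A^*$ of $aa^{T}$, which has rank at most one.
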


\begin{proof}
Since $|X|=d+2$ and $p$ is generic, the points $\{p_i:i\in X\}$ have a
unique affine dependence, up to scaling. Thus there exist reals
$a_i$, $i\in X$, not all zero, such that
\[
\sum_{i\in X} a_i p_i=0,
\qquad
\sum_{i\in X} a_i=0.
\]
Moreover, by genericity, every $a_i$ is nonzero. Extend $a$ to a vector
in $\mathbb R^V$ by setting $a_i=0$ for $i\notin X$, and define $A=aa^T$.
We now claim that $A$ is a stress matrix supported on the clique $G[X]$. Indeed,
\[
A\mathbf 1=a(a^T\mathbf 1)=0
\qquad\text{and}\qquad
AP=a(a^TP)=0,
\]
where $P$ is the configuration matrix of $p$, namely the $|V|\times d$
matrix whose row indexed by $v\in V$ is $p(v)^T$. Also, for every clique
edge $ij\in E(G[X])$, we have $A_{ij}=a_i a_j\ne 0$.

Now let $\Omega$ be a stress matrix of $(G,p)$ with rank $r$. Fix a
clique edge $ij\in E(G[X])$, and define
\[
c_{ij}:=\frac{\Omega_{ij}}{A_{ij}}
=
\frac{\Omega_{ij}}{a_i a_j}.
\]
Then the stress matrix $\Omega-c_{ij}A$ has zero $(i,j)$-entry. Hence it is a stress matrix of the graph $G-ij$.
Since $G$ is edge-minimal generically globally rigid, $G-ij$ is not
generically globally rigid in $\mathbb R^d$. By the stress-rank
characterization of generic global rigidity, Theorem~\ref{thm:stress-rank},
no stress matrix of $G-ij$ can have rank $r$. Therefore
\[
\rank(\Omega-c_{ij}A)<r.
\]
Since $\rank(\Omega)=r$, then $\Omega$ has a $r \times r$ submatrix $\Omega^*$ such that $\det(\Omega^*) \neq 0$.
Let $A^*$ be the submatrix obtained from $A$ by keeping the same set of rows and columns as for $\Omega^*$.
Now $\rank(\Omega-c_{ij}A)<r$ implies that $\det(\Omega^*-c_{ij}A^*)=0$.
However, $\det(\Omega^*-tA^*)$ is a linear function in $t$ (since $A^*$ has rank at most $1$) with constant coefficient $\det(\Omega^*) \neq 0$, which thus has at most one value $t$ for which it cancels. 
Hence all the reals $c_{ij}$ are equal, which proves the lemma.
\end{proof}

We are now ready to prove \Cref{thm:main}.

\begin{proof}[Proof of \Cref{thm:main}]
Suppose, for a contradiction, that $G$ contains a copy of $K_{d+2}$.
Let $X\subseteq V$ be the vertex set of such a clique.
Let $p:V\to \mathbb R^d$ be a generic framework of $G$, and set $n=|V|$ and $r=n-d-1$.

By Lemma~\ref{lem:clique-proportionality-full-rank}, there exists
a nonzero real $a_i$, for every $i \in X$, such that for every rank-$r$ stress
matrix $\Omega$ of $(G,p)$, there is a real $\lambda=\lambda(\Omega)$ such that for every $ij\in E(G[X])$ we have
\begin{align*}
    \Omega_{ij}=\lambda a_i a_j.
\end{align*}
We first extend this conclusion to all
stress matrices of $(G,p)$. 
By Lemma~\ref{lem:clique-proportionality-full-rank}, for every rank-$r$
stress matrix $\Omega$ and every two clique edges $ij,kl\in E(G[X])$, we have
\[
\frac{\Omega_{ij}}{a_i a_j}
=
\frac{\Omega_{kl}}{a_k a_l}.
\]
Since the rank-$r$ stress matrices are dense in the stress space by
Lemma~\ref{lem:full-rank-stresses-dense}, the same equality
holds for every stress matrix $\Omega$ of $(G,p)$.
Thus, for every
stress matrix $\Omega$ there exists a real $\lambda=\lambda(\Omega)$
such that for every $ij\in E(G[X])$ we have
\begin{align*}
    \Omega_{ij}=\lambda a_i a_j.
\end{align*}
Since every $a_i$ is nonzero, we have that if a
stress matrix is nonzero on one edge of the clique $G[X]$, then it is
nonzero on every edge of the clique.

We now choose an edge $e\in E(G[X])$.
Since $n\ge d+3$ and $G$ is connected, there is an edge $f\in E(G)\setminus E(G[X])$.
By Theorem~\ref{thm:Rd-connected}, the graph $G$ is $\mathcal R_d$-connected.
Hence there exists an $\mathcal R_d$-circuit $C$ such that $e,f\in C$.

Let $\Omega_C$ be a nonzero stress matrix supported on the circuit $C$ extended by zero outside $C$. Such a stress matrix exists since the rows of the generic rigidity matrix indexed by $C$ are linearly dependent, with not all zero coefficients $\omega_{uv}$ for $uv\in C$ say, and we may define $\Omega_C$ by setting, for $u \neq v$,
$(\Omega_C)_{uv}= \omega_{uv}$ if $uv\in C$, $(\Omega_C)_{uv}= 0$ if $uv\notin C$, and $(\Omega_C)_{uu}=-\sum_{v\ne u}(\Omega_C)_{uv}$. It is then easy to check that $\Omega_C$ is a nonzero stress matrix on the circuit $C$.
Note further that $\Omega_C$ is nonzero on every edge of $C$, as otherwise this would contradict the minimality of $C$. 
In particular,
\[
(\Omega_C)_e\ne 0.
\]
As $e$ is an edge of the clique $G[X]$, it follows by the proportionality property
proved above that for every $ij\in E(G[X])$ we have
\[
(\Omega_C)_{ij}\ne 0
\qquad
\]
Thus every edge of $G[X]$ belongs to the support of $\Omega_C$. Since
$\Omega_C$ is supported on $C$, this implies
\[
E(G[X])\subseteq C.
\]
Since $G[X]\cong K_{d+2}$ is itself an $\mathcal R_d$-circuit, we have that $C=E(G[X])$.
But $f\in C$ and $f\notin E(G[X])$, which is a contradiction.
\end{proof}

\bibliographystyle{abbrv}
\bibliography{bib}

\end{document}